\documentclass[11pt]{article}
\usepackage{amsmath,amssymb,amsthm}
\DeclareMathOperator{\End}{End}
\DeclareMathOperator{\Hom}{Hom}
\DeclareMathOperator{\Soc}{Soc}
\DeclareMathOperator{\rad}{rad}
\usepackage{geometry}
\usepackage{graphicx}
\usepackage{float}
\usepackage{caption}
\usepackage[T1]{fontenc}
\usepackage[utf8]{inputenc}
\theoremstyle{plain}
\newtheorem{theorem}{Theorem}
\newtheorem{lemma}{Lemma}
\theoremstyle{remark}
\usepackage{lineno}

\title{On the minimal dimension of maximal commutative subalgebras of $M_6(k)$}
\author{Małgorzata Nowak-Kępczyk\footnote{Corresponding Author}\\
	$\rule{0pt}{13pt}$Institute of Informatics\\John Paul II Catholic University of Lublin, Poland\\email: \texttt{malnow@kul.pl}}
\date{}
\begin{document}
	
	\maketitle

\begin{abstract}
We study the minimal dimension of maximal commutative subalgebras of the matrix algebra $M_n(k)$ over an algebraically closed field. While examples with dimension strictly smaller than n are known for $n \geq 14$, no such examples are known in smaller dimensions.

In this paper, we show that for n = 6 every maximal commutative subalgebra $A\subset M_6(k)$ satisfies $\dim A \geq 6$. The proof is based on a detailed analysis of local algebras and their module structure, combined with explicit estimates of the dimension of the centralizer.\medskip

\end{abstract}
	
\paragraph{Keywords:}
maximal commutative subalgebra; matrix algebra; local algebra; centralizer; nilpotent algebra

\paragraph{MSC}
15A30, 15A27, 16S50, 13H10, 16P10	
	
\section{Introduction}

Let $k$ be an algebraically closed field and let $M_n(k)$ denote the algebra of $n \times n$ matrices over $k$. A commutative subalgebra $A \subset M_n(k)$ is called \emph{maximal} if it coincides with its centralizer in $M_n(k)$.

The determination of the minimal possible dimension of a maximal commutative subalgebra of $M_n(k)$ is a subtle problem even for small values of $n$. 

Courter already in 1961 exhibited a maximal commutative subalgebra $A \subset M_{14}(k)$ with $\dim A = 13 < 14$, showing that the bound $\dim A \geq n$ fails in general.

The cases of small dimensions remain largely inaccessible to general methods. In particular, the case $n=6$ is the first one where Courter’s asymptotic argument no longer applies, and a direct analysis of finite-dimensional local algebras becomes necessary.

A major difficulty is that even in dimension $5$, there exist many non-isomorphic local commutative algebras, with substantially different multiplicative structures and different dimensions of their centralizers. Thus, the problem cannot be reduced to a single model case: one must control an entire family of essentially distinct algebra types. This makes the case $n=6$ the first dimension where a genuinely structural classification becomes unavoidable.

In this sense, the case $n=6$ provides the first rigid test case for the conjectural lower bound and gives further evidence that the phenomenon observed by Courter in large dimensions may already persist in substantially smaller dimensions.

General lower bounds on $\dim A$ are known. In particular, Laffey proved that
\[
\dim A > (2n)^{2/3} - 1.
\]
For $n = 6$, this bound gives approximately $4.8088$, and hence only implies $\dim A \geq 5$.

In this paper, we consider the case $n = 6$. Our main result is the following.

\begin{theorem}
	Let $A \subset M_6(k)$ be a maximal commutative subalgebra. Then
	\[
	\dim A \geq 6.
	\]
\end{theorem}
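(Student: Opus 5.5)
Suppose $A \subset M_6(k)$ is maximal commutative with $\dim A \le 5$; we aim for a contradiction.

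\textbf{Reduction to the local case.} Since $A$ is commutative and finite-dimensional over an algebraically closed field, $A = A_1 \times \dots \times A_r$ with each $A_i$ local. The corresponding idempotents split $V = k^6 = V_1 \oplus \dots \oplus V_r$, and the centralizer of $A$ equals $\prod_i C_{\End(V_i)}(A_i)$. Hence $A$ is maximal commutative if and only if each $A_i$ is maximal commutative in $\End(V_i)$. For $n_i = \dim V_i \le 5$, the bound $\dim A_i \ge n_i$ is classical: Schur's bound gives $\dim \le \lfloor n^2/4\rfloor+1$ only from above, but for small $n$ the minimal dimension result is known (and will be re-derived in the same way below). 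Therefore if $r \ge 2$ we get $\dim A = \sum \dim A_i \ge \sum n_i = 6$. So we may assume $A = k\cdot 1 + N$ is local, with $N$ nilpotent, $\dim N \le 4$, and $V$ a faithful $A$-module with $\End_A(V) = A$.

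\textbf{Local case: casework on the Hilbert function.} Let $h = (1, h_1, h_2, \dots)$ with $h_i = \dim N^i/N^{i+1}$, so $\sum_{i\ge 1} h_i \le 4$. If $h_1 = 1$, then $A = k[x]$ and its centralizer has dimension $\sum_j (2j-1)\lambda_j' \ge 6$, where $\lambda$ is the Jordan type of $x$. Equality with $A$ forces $x$ to be cyclic, so $\dim A = 6$, a contradiction. Otherwise $h_1 \in \{2,3,4\}$, which leaves only finitely many algebra types:
\[
(1,2),\ (1,2,1),\ (1,2,1,1),\ (1,2,2),\ (1,3),\ (1,3,1),\ (1,4),
\]
together with their known isomorphism classes (for example $k[x,y]/(x^2,y^2)$, $k[x,y]/(xy,x^3,y^2)$, and $k[x,y]/(x^2,xy,y^3)$). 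For each type we bound $\dim C(A)$ from below. The standard tool is this: if $W = \Soc(V)$ or $W = NV$, then any linear map $V \to \ker N$ that vanishes on $NV$ commutes with $A$. This gives
\[
\dim C(A) \ge \dim \Hom_k(V/NV, \Soc V) = d_{\mathrm{top}}\cdot d_{\mathrm{soc}},
\]
where $d_{\mathrm{top}} = \dim V/NV$ and $d_{\mathrm{soc}} = \dim \Soc V$. Adding scalars and $N$ itself (which sits inside these maps only partially) refines this to the estimate $\dim C(A) \ge d_{\mathrm{top}} d_{\mathrm{soc}} + \dim A - \dim(N \cap \Hom(V/NV, \Soc V))$. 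Since $\dim V = 6$ and the Loewy length of $V$ is at most that of $A$ (at most $4$), the top or the socle is large, and $d_{\mathrm{top}} d_{\mathrm{soc}}$ exceeds $\dim A \le 5$ in most cases.

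\textbf{Main obstacle.} The hard cases are those where $V$ is nearly cyclic, so that $d_{\mathrm{top}}$ or $d_{\mathrm{soc}}$ equals $1$ (or both equal $2$). If $d_{\mathrm{top}} = 1$, then $V \cong A/I$ with $\dim A \ge 6$, a contradiction. The dual argument handles $d_{\mathrm{soc}} = 1$. The remaining configurations have $d_{\mathrm{top}} = d_{\mathrm{soc}} = 2$ with $\dim A \in \{4,5\}$. There $V$ is an extension of small modules, and I would write $N$ in block form relative to a filtration $V \supset NV \supset N^2V \supset \dots$ and exhibit explicit extra commuting maps. These are maps $V/NV \to NV/N^2V$ that are compatible with the action, and maps into $\Soc V$ that come from the layer below the top. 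I expect to show that they yield at least one endomorphism outside $A$. This block-matrix case analysis, especially for the types $(1,2,1)$ and $(1,2,1,1)$ acting on modules with Loewy layers $(2,2,2)$ or $(2,2,1,1)$, is where most of the work lies.
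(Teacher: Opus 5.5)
Your reduction to local $A$, the $h_1=1$ case, and the cyclic/cocyclic arguments for $d_{\mathrm{top}}=1$ or $d_{\mathrm{soc}}=1$ are sound. One small correction: the centralizer formula should read $\sum_j(2j-1)\lambda_j=\sum_j(\lambda_j')^2$. Yours gives $36$ for a single $6\times 6$ Jordan block, although the conclusion you draw from it is the classical one. The gap is that the proof stops at the configuration you call the main obstacle, $d_{\mathrm{top}}=d_{\mathrm{soc}}=2$. You leave it to a block-matrix analysis that you only \emph{expect} to succeed, so as written the local case is not proved. This case looks hard only because you compare $d_{\mathrm{top}}d_{\mathrm{soc}}$ with $\dim A$. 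Your own estimate needs only $d_{\mathrm{top}}d_{\mathrm{soc}}>\dim(A\cap H)$, where $H=\Hom_k(V/NV,\Soc V)$, and you never compute that intersection.

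The missing step is one line. If the operator of $a\in A$ lies in $H$, then $a(NV)=0$, so $aN=0$ by faithfulness. Hence $a\in\operatorname{Ann}_A(N)=\Soc(A)$, and conversely, so $A\cap H=\Soc(A)$.
\begin{itemize}
\item If $N^2\neq 0$, then $\Soc(A)\subsetneq N$, so $\dim(A\cap H)\le\dim A-2\le 3$. This is less than $4\le d_{\mathrm{top}}d_{\mathrm{soc}}$ whenever both invariants are at least $2$.
\item If $N^2=0$, then $NV\subseteq\Soc V$, so $d_{\mathrm{soc}}\ge 6-d_{\mathrm{top}}$. For $2\le d_{\mathrm{top}}\le 5$ this gives $d_{\mathrm{top}}d_{\mathrm{soc}}\ge 5>4\ge\dim\Soc(A)$. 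In particular, $d_{\mathrm{top}}=d_{\mathrm{soc}}=2$ cannot even occur here.
\end{itemize}
So $A+H\supsetneq A$ inside $\End_A(V)$ in every configuration not already excluded by cyclicity of $V$ or $V^*$. No analysis of the types $(1,2,1)$, $(1,2,1,1)$, etc.\ is needed. The same count, with $6$ replaced by any $n\le 6$, also proves the $n_i\le 5$ input to your reduction step, so you need not cite it as classical.

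With this fix, your route is genuinely different from the paper's and more uniform. The paper treats only local $A$ with $\dim A=5$; it relies on Laffey's bound for smaller dimension and does not discuss the non-local case. It then splits by the Hilbert--Samuel types of the nine isomorphism classes. It uses the same $\Hom_k(V/JV,L)$ device with $L=JV$, $J^2V$, $J^3V$ or $\Soc V$, supplemented by explicit block-matrix centralizer computations for particular Loewy layer types. Your organisation by $(d_{\mathrm{top}},d_{\mathrm{soc}})$, with $L=\Soc V$ throughout and the identification $A\cap H=\Soc(A)$, needs no classification. It handles all layer types, all $\dim A\le 5$, and the non-local case at once.
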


A key ingredient of the proof is the classification of local commutative algebras of dimension $5$, which reduces the problem to finitely many cases. In each case, we construct enough $A$-endomorphisms of the natural module to show that the centralizer is strictly larger than $A$, so $A$ cannot be maximal. This proves that no maximal commutative subalgebra of $M_6(k)$ can have dimension $5$.

Thus, the case $n=6$ admits no counterexample to the inequality $\dim A \geq n$, providing further evidence that the first occurrence of $\dim A < n$ must happen in higher dimension.

\section{Cases}

By the classification of local commutative algebras of dimension $5$, there are exactly nine isomorphism classes, grouped according to their Hilbert--Samuel type.

\[
\begin{array}{c|c|l}
	\text{Classes} & \text{Hilbert--Samuel type} & \text{Strategy} \\ \hline
	9 & (1,1,1,1,1) & \text{monogenic; Jordan centralizer} \\
	10,12 & (1,2,1,1) & \text{use } \dim(J/J^2)=2,\ \dim(J^2/J^3)=1 \\
	11,13 & (1,2,2) & \text{use } \dim(J/J^2)=2,\ \dim J^2=2 \\
	14,15,16 & (1,3,1) & \text{use } \dim(J/J^2)=3,\ \dim J^2=1 \\
	17 & (1,4) & J^2=0;\ \text{direct argument}
\end{array}
\]

Our strategy is uniform. Let $A = k \cdot 1 \oplus J$ be a local algebra and let $V = k^6$ be a faithful $A$-module. We construct sufficiently many $A$-endomorphisms of $V$ to show that
\[
\dim \End_A(V) > \dim A.
\]
This implies that the image of $A$ in $M_6(k)$ is not maximal commutative.

The main tool is the following observation: if $L \subseteq V$ satisfies $JL = 0$, then every linear map $V/JV \to L$ induces an $A$-endomorphism of $V$. This yields large subspaces of $\End_A(V)$ in a uniform way.

We follow the classification summarized in Table~1 and treat each Hilbert--Samuel type separately.

\subsection{The Hilbert--Samuel type $(1,4)$}

We first consider the case $J^2 = 0$.

\begin{lemma}\label{lem:14}
	Let $A = k \cdot 1 \oplus J$ be a local commutative algebra of dimension $5$ over an algebraically closed field $k$, and assume that
	\[
	\dim J = 4, \qquad J^2 = 0.
	\]
	Let $V = k^6$ be a faithful $A$-module. Then
	\[
	\dim \End_A(V) \ge 6.
	\]
	In particular, the image of $A$ in $M_6(k)$ is not maximal commutative.
\end{lemma}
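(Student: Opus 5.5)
The plan is to exhibit an explicit family of $A$-endomorphisms of $V$ by the observation stated before the table, and then count dimensions using faithfulness. Put
\[
W := JV, \qquad S := \{v \in V : Jv = 0\}, \qquad r := \dim W, \qquad s := \dim S .
\]
Because $J^2 = 0$, we have $J(JV) = 0$, so $W \subseteq S$ and therefore $r \le s$. Faithfulness gives $J \neq 0$ on $V$, so $r \ge 1$.

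\emph{Step 1 (endomorphisms).} Take any linear map $f\colon V/W \to S$, and let $\tilde f\colon V \to S$ be its composite with the projection $V \to V/W$. For $x \in J$ and $v \in V$ we have $\tilde f(xv) = 0$, since $xv \in W$. We also have $x\tilde f(v) = 0$, since $\tilde f(v) \in S$. Hence $\tilde f \in \End_A(V)$. This gives a subspace of $\End_A(V)$ of dimension $(6-r)s$. Every map in this subspace vanishes on $W \neq 0$, so the identity does not lie in it. Consequently
\[
\dim \End_A(V) \ \ge\ (6-r)s + 1 .
\]

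\emph{Step 2 (constraint from faithfulness).} Each $x \in J$ kills $S$ and has image in $W$. So $x$ induces a linear map $V/S \to W$, and the resulting map $J \to \Hom(V/S, W)$ is injective because $V$ is faithful. Therefore
\[
4 \le (6-s)\,r .
\]
In particular $s \le 5$.

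\emph{Step 3 (case check).} This is the only place requiring care, and it reduces to a finite check using $1 \le r \le s \le 5$ together with $(6-s)r \ge 4$. It suffices to show $(6-r)s \ge 5$ in every admissible case.
\begin{itemize}
\item If $r = 1$, then $(6-r)s = 5s \ge 5$.
\item If $r \ge 2$, then $s \ge r \ge 2$.
  \begin{itemize}
  \item If $r \le 4$, then $6 - r \ge 2$, so $(6-r)s \ge 2 \cdot 2 = 4$. Equality would force $r = s = 4$, but then $(6-r)s = 2 \cdot 4 = 8$, so in fact $(6-r)s \ge 5$ always holds here.
  \item If $r = 5$, then $s = 5$ and $(6-r)s = 5$.
  \end{itemize}
\end{itemize}
In all cases $\dim \End_A(V) \ge 6$, which is the bound claimed in the lemma.

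Finally, since $\dim A = 5 < 6 \le \dim \End_A(V)$, the centralizer of the image of $A$ is strictly larger than that image, so $A$ is not maximal commutative.
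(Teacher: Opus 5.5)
Your proof is correct and is essentially the paper's argument: embed $\Hom_k(V/JV,L)$ into $\End_A(V)$ for a subspace $L$ annihilated by $J$, add the identity, and use the faithful injection of $J$ into a Hom space to rule out small products. The only difference is that you take $L$ to be the socle $S\supseteq JV$ (and bound via $\Hom_k(V/S,W)$), whereas the paper takes $L=JV$ and uses that $a+b=6$, $ab\ge 4$ forces $ab\ge 5$; there is also one small slip in your Step 3, since for $2\le r\le 4$ equality $(6-r)s=4$ would force $r=4$, $s=2$ (not $r=s=4$), contradicting $s\ge r$, and more directly $(6-r)s\ge(6-r)r\ge 8$ in that range.
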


\begin{proof}
	Since $J^2=0$, we have $J(JV)=J^2V=0$. Thus $J$ acts trivially on both $V/JV$ and $JV$.
	
	Hence every linear map
	\[
	\varphi \colon V/JV \to JV
	\]
	induces an $A$-endomorphism $\widetilde{\varphi}\in \End_A(V)$ given by
	\[
	\widetilde{\varphi}(v)=\varphi(v+JV).
	\]
	Indeed, for $x\in J$ and $v\in V$, we have $\widetilde{\varphi}(xv)=0$ and $x\,\widetilde{\varphi}(v)=0$, so $\widetilde{\varphi}\in \End_A(V)$.
	
	Thus there is an embedding
	\[
	\Hom_k(V/JV,JV)\hookrightarrow \End_A(V).
	\]
	
	Set $a:=\dim(V/JV)$ and $b:=\dim(JV)$. Then $a+b=6$ and
	\[
	\dim \End_A(V)\ge ab.
	\]
	
	Moreover, the action of $J$ induces an injective map
	\[
	J \to \Hom_k(V/JV,JV),
	\]
	since $V$ is faithful. Hence $4=\dim J \le ab$.
	
	Since $a+b=6$ and $ab\ge 4$, we have $(a,b)\in \{(1,5),(2,4),(3,3),(4,2),(5,1)\}$, and therefore $ab\ge 5$. Thus
	\[
	\dim \End_A(V)\ge 5.
	\]
	
	Finally, scalar endomorphisms belong to $\End_A(V)$, and $\mathrm{id}_V$ does not lie in the image of $\Hom_k(V/JV,JV)$. Hence
	\[
	\dim \End_A(V)\ge ab+1\ge 6.
	\]
	
	Since $\dim A=5$, it follows that $\dim \End_A(V) > \dim A$, and the image of $A$ in $M_6(k)$ is not maximal commutative.
\end{proof}

We now turn to the case of Hilbert--Samuel type $(1,3,1)$.

\begin{lemma}\label{lem:131-321}
	Let $A = k \cdot 1 \oplus J$ be a local commutative algebra of dimension $5$ over an algebraically closed field $k$, with Hilbert--Samuel type $(1,3,1)$. Let $V = k^6$ be a faithful $A$-module and assume that
	\[
	\dim(V/JV,\ JV/J^2V,\ J^2V)=(3,2,1).
	\]
	Then
	\[
	\dim \End_A(V)\ge 6.
	\]
	In particular, the image of $A$ in $M_6(k)$ is not maximal commutative.
\end{lemma}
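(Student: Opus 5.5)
We follow the uniform strategy of Lemma~\ref{lem:14}: find a subspace $L\subseteq V$ with $JL=0$ and use the embedding $\Hom_k(V/JV,L)\hookrightarrow \End_A(V)$, $\varphi\mapsto \widetilde{\varphi}$ with $\widetilde{\varphi}(v)=\varphi(v+JV)$. As in the proof of Lemma~\ref{lem:14}, this map is $A$-linear: $\widetilde{\varphi}(xv)=0$ because $xv\in JV$, and $x\widetilde{\varphi}(v)=0$ because $JL=0$. To this family we then add $\mathrm{id}_V$, together with the action of the elements of $J$ that do not already lie in the span of these endomorphisms.

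\textbf{Choice of $L$.} Since the type is $(1,3,1)$, we have $J^3=0$, so $J\cdot J^2V=0$ and $L_0:=J^2V$ (of dimension $1$) is always annihilated by $J$. This alone gives only $3\cdot 1=3$ endomorphisms, which is too few. We therefore enlarge $L$ to $L=\{v\in V: Jv=0\}=\Soc(V)$, which contains $J^2V$, and try to show $\dim L\ge 2$. If $\dim \Soc(V)\ge 2$, we get $3\cdot 2=6$ independent endomorphisms of the form $\widetilde\varphi$, all of which map $V$ into $\Soc(V)$ and kill $JV$. Adding $\mathrm{id}_V$, which is not of this form, gives $\dim\End_A(V)\ge 7\ge 6$.

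\textbf{Main obstacle: the case $\Soc(V)=J^2V$.} Here I would use the graded structure directly.
\begin{itemize}
\item Left multiplication by elements of $J$ is $A$-linear because $A$ is commutative, so the image of $A$ lies in $\End_A(V)$. Since $V$ is faithful, this image has dimension $5$.
\item The endomorphisms $\widetilde\varphi$ with $\varphi\colon V/JV\to J^2V$ form a $3$-dimensional space. Its intersection with the image of $A$ lies in the image of the elements acting as maps $V\to J^2V$ that vanish on $JV$.
\item Since $\Soc(V)$ is $1$-dimensional, any $x\in J$ with $xJV=0$ must send $V$ into $\Soc(V)$. Such $x$ lie in $\{x: xJV=0\}$, and this subspace of $J$ has small dimension: it contains $J^2$ (dimension $1$), and I would argue that $J/J^2\to\Hom(JV/J^2V,J^2V)$ must have a large image. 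Otherwise, elements of $J\setminus J^2$ would kill $JV$, and this would force $\dim\Soc(V)\ge 2$.
\end{itemize}
Counting $5+3-\dim(\text{intersection})$, it suffices to show the intersection has dimension at most $2$. I expect the intersection to be spanned by $J^2$ together with at most one more direction. If that bound fails, I would fall back on producing a single extra endomorphism by hand, namely one that maps $JV$ into $J^2V$ while commuting with the $J$-action, of a form like $\psi\circ x$.

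I expect this dimension comparison in the minimal-socle case to be the delicate step, since it genuinely depends on how $J/J^2$ acts between the layers $(3,2,1)$. The first thing I would try is to write $V$ in a basis adapted to $V\supset JV\supset J^2V$, express the three generators of $J/J^2$ as block matrices $3\to 2\to 1$, and read off the bounds from these blocks.
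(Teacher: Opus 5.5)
Your route is sound and differs from the paper's, but one step is justified incorrectly. You say that if $J/J^2\to\Hom(JV/J^2V,J^2V)$ had small image, ``elements of $J\setminus J^2$ would kill $JV$'', and that this would force $\dim\Soc(V)\ge 2$. Such elements always exist, so this cannot force anything. The map $\nu\colon J\to\Hom_k(JV/J^2V,J^2V)\cong k^2$ has kernel of dimension at least $2$, and that kernel contains the line $J^2$.

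The correct argument is on the module side. If $\operatorname{rank}\nu\le 1$, the functionals $\nu(u)$ on the $2$-dimensional space $JV/J^2V$ have a common nonzero zero $\bar v$. Any lift $v\in JV\setminus J^2V$ then satisfies $Jv=0$, so $\dim\Soc(V)\ge 2$. Hence when $\Soc(V)=J^2V$ you get $\operatorname{rank}\nu=2$ and $\dim\ker\nu=2$. The intersection of the image of $A$ with $\{\widetilde\varphi\}$ lies in $\ker\nu$, because its elements have no scalar part and kill $JV$. So $\dim\End_A(V)\ge 5+3-2=6$, and no fallback is needed.

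A smaller point: $x(JV)=0$ implies $xV\subseteq\Soc(V)$ because $J(xV)=x(JV)$ by commutativity, not because the socle is $1$-dimensional. Your Case 1 ($\dim\Soc(V)\ge 2$, giving at least $7$ via $\Hom_k(V/JV,\Soc(V))$ plus $\mathrm{id}_V$) is fine as written.

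Compared with the paper: the paper chooses a basis adapted to $V\supset JV\supset J^2V$. It normalizes $N_x=(1\ 0)$, $N_y=(0\ 1)$, $N_z=0$, and the $L_u$ via its normal-form lemma. It then solves $Xu=uX$ blockwise ($Q=rI_2$, $P=rI_3$, $TL_u=N_uU$) and counts the parameters $r$, $T$, $W$. Where that normalization is available, the resulting $6$-dimensional family is exactly your $A+\Hom_k(V/JV,J^2V)$:
\begin{itemize}
\item $1$ supplies $r$;
\item $x,y$ supply $T$, with $U$ then determined;
\item $z$, $w$ and the maps $\widetilde\varphi$ supply $W$.
\end{itemize}

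Your version buys two things.
\begin{itemize}
\item It is coordinate-free and covers every case. The paper's normalization tacitly assumes the $N_u$ span $(JV/J^2V)^*$, which is equivalent to $\Soc(V)\cap JV=J^2V$. This can fail: for class 16, $yJ=zJ=0$ forces $N_y=N_z=0$. Your Case 1 covers exactly that situation.
\item It avoids making the normal forms for the $L_u$ and $N_u$ compatible with $N_uL_v=N_vL_u$. For example, with $N_x,N_y$ spanning and $N_z=0$, this relation forces $L_z=0$. That is what actually leaves $T$ unconstrained, and it does not match the nonzero normal form the paper uses for $L_z$.
\end{itemize}
The paper's block computation gives more explicit information about the shape of the centralizer where its normalization applies, but your counting argument does not need it.
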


\begin{proof}
	We have $\dim J=4$, $\dim(J/J^2)=3$, and $\dim J^2=1$. Choose $x,y,z,w\in J$ such that
	\[
	J=\langle x,y,z,w\rangle,\qquad J^2=\langle w\rangle,
	\]
	and the classes of $x,y,z$ form a basis of $J/J^2$.
	
	Let
	\[
	V=V_0\oplus V_1\oplus V_2,\qquad \dim V_0=3,\ \dim V_1=2,\ \dim V_2=1,
	\]
	compatible with the filtration $V\supset JV\supset J^2V\supset 0$.
	
	Each $u\in J$ has the form
	\[
	u=
	\begin{pmatrix}
		0&0&0\\
		L_u&0&0\\
		M_u&N_u&0
	\end{pmatrix},
	\]
	where $L_u:V_0\to V_1$, $N_u:V_1\to V_2$, and $M_u:V_0\to V_2$.
	
	Since $\dim V_2=1$, the maps $N_x,N_y,N_z$ can be viewed as row vectors in $(V_1)^*$. After a change of basis, we may assume
	\[
	N_x=(1\ 0),\qquad N_y=(0\ 1),\qquad N_z=(0\ 0).
	\]
	
	Let
	\[
	X=
	\begin{pmatrix}
		P&0&0\\
		U&Q&0\\
		W&T&r
	\end{pmatrix}\in \End_A(V).
	\]
	
	From $Xu=uX$ we obtain, on $V_1\to V_2$,
	\[
	rN_u=N_uQ.
	\]
	Applying this for $u=x,y$ yields $Q=rI_2$.
	
	On $V_0\to V_1$ we obtain $QL_u=L_uP$, hence $L_uP=rL_u$. Using the normal form for $(L_x,L_y,L_z)$, we obtain $P=rI_3$.
	
	The remaining relation simplifies to
	\[
	TL_u=N_uU.
	\]
	Thus $U$ is determined by $T$, while
	\[
	W\in \Hom(V_0,V_2)
	\]
	is arbitrary.
	
	Therefore $\End_A(V)$ has at least the following parameters:
	\[
	r\ (1),\quad T\in \Hom(V_1,V_2)\ (2),\quad W\in \Hom(V_0,V_2)\ (3),
	\]
	so $\dim \End_A(V)\ge 6$.
	
	Since $\dim A=5$, the result follows.
\end{proof}

\begin{lemma}[Normal form for the triple $(L_x,L_y,L_z)$]
	Assume that
	\[
	\dim V_0=3,\qquad \dim V_1=2,
	\]
	and let $L_x,L_y,L_z\in \Hom(V_0,V_1)$ be the maps induced by elements
	$x,y,z\in J$ whose classes form a basis of $J/J^2$. Assume that the images of
	$L_x,L_y,L_z$ span $V_1$. Then, after replacing $x,y,z$ by another basis of
	$J/J^2$ and performing suitable changes of basis in $V_0$ and $V_1$, one may assume that
	\[
	L_x=
	\begin{pmatrix}
		1&0&0\\
		0&1&0
	\end{pmatrix},\qquad
	L_y=
	\begin{pmatrix}
		0&0&1\\
		0&0&0
	\end{pmatrix},\qquad
	L_z=
	\begin{pmatrix}
		0&0&0\\
		0&0&1
	\end{pmatrix}.
	\]
\end{lemma}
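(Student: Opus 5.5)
The plan is to forget the particular elements $x,y,z$ and work with the subspace
\[
S=\langle L_x,L_y,L_z\rangle\subseteq \Hom(V_0,V_1)\cong M_{2\times 3}(k).
\]
Replacing $x,y,z$ by another basis of $J/J^2$ changes only the chosen basis of $S$, while changes of basis in $V_0,V_1$ act on $S$ by $L\mapsto gLh^{-1}$ with $(g,h)\in GL(V_1)\times GL(V_0)$. The claimed normal form says exactly that $S$ is, up to this action, the subspace
\[
S_0=\left\{\begin{pmatrix} a&0&b\\ 0&a&c\end{pmatrix}\ :\ a,b,c\in k\right\}.
\]
So I would first establish that $\dim S=3$, and then identify the orbit of $S$ under $GL_2\times GL_3$.

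The step $\dim S=3$ would come from faithfulness together with the filtration. If some combination $\alpha L_x+\beta L_y+\gamma L_z$ vanished, then $u=\alpha x+\beta y+\gamma z\notin J^2$ would act with $L_u=0$, so $u$ would map $V$ into $V_2=J^2V$. I would then use $uJ\subseteq J^2=\langle w\rangle$ and the relations obtained by comparing the $V_0\to V_2$ blocks of $uv$ and $vu$, namely $N_uL_v=N_vL_u$, to derive a contradiction with the Hilbert--Samuel type $(1,3,1)$. This kind of contradiction is the same mechanism by which the hypothesis ``the images span $V_1$'' comes from $JV/J^2V=V_1$.

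For the orbit classification I would use the Kronecker theory of pencils. I would choose $L_x$ of rank $2$; such an element exists because a $3$-dimensional space of $2\times 3$ matrices cannot consist only of matrices of rank $\le 1$. Those spaces are $\{v\phi\}$ with $v$ fixed or $\{v\phi_0\}$ with $\phi_0$ fixed, which have dimension $3$ only in the first case, and that case would violate the spanning of $V_1$. Normalizing $L_x=(I_2\mid 0)$, I would subtract multiples of $L_x$ and use the stabilizer of $L_x$ to reduce $L_y,L_z$. The stabilizer consists of block lower-triangular $h$ with top block $g$; it acts on the first $2\times 2$ blocks by conjugation and shifts the third columns. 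The hope is that the third columns of $L_y,L_z$ become $e_1$ and $e_2$ while their $2\times 2$ parts become $0$.

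I expect the main obstacle to be this last reduction, because the statement as written does not seem to follow from the spanning hypothesis alone. For example,
\[
S_1=\left\{\begin{pmatrix} a&b&0\\ c&a&0\end{pmatrix}\right\}
\]
is a $3$-dimensional subspace whose images span $V_1$, but it has a common kernel vector, whereas $S_0$ has none. So I would first isolate the extra invariant: that $\bigcap_{L\in S}\ker L=0$, and also that $S$ is not of the shape $S_1$. I would try to derive this from the setting of Lemma~\ref{lem:131-321} as follows. A vector $v\in V_0$ killed by all $L_u$ satisfies $Jv\subseteq V_2$. I would combine this with $\dim(V/JV)=3$, the normalization $N_x=(1\ 0)$, $N_y=(0\ 1)$, $N_z=0$, and the commutation relations $N_uL_v=N_vL_u$ to force either a contradiction or a reduction to a different case. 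Once common kernels and the remaining degenerate orbits are excluded, the stabilizer computation above should leave only $S_0$. Finally, I would re-choose $x,y,z$ within $J/J^2$ so that the three basis matrices of $S_0$ appear as stated.
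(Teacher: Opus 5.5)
There is a genuine gap, and it lies in the statement itself: the lemma is false, your $S_1$ is a valid counterexample to it as written, and the paper's own proof does not get around this. The paper normalizes $L_x=(I_2\mid 0)$. It then asserts that replacing $y$ by $y-\alpha x$ and changing bases while fixing $L_x$ kills the first two columns of $L_y$. Finally it asserts that the stabilizer of $(L_x,L_y)$ moves every nonzero last row $(u,v,w)$ of $L_z$ to $(0,0,1)$. Base changes fixing $(I_2\mid 0)$ keep a zero third column zero, so for $S_1$ the second assertion would force $L_y=0$. For the third, $(u,v,w)=(1,0,0)$ gives $\{\begin{pmatrix} a&0&b\\ c&a&0\end{pmatrix}\}$, whose rank-$\le 1$ elements form two lines, whereas those of $S_0$ form the whole plane $a=0$. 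So the paper does not supply your missing step, and both steps of your proposed repair fail as well.

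First, $\dim S=3$ is not forced. Comparing $V_0\to V_2$ blocks gives $N_uL_v=c(u,v)M_w$, where $c$ is the multiplication form $J/J^2\times J/J^2\to J^2=\langle w\rangle$ and $M_w\neq 0$. From $L_u=0$ you only get $N_u=0$ and $u\in\operatorname{Ann}(J)$. This is strictly bigger than $J^2$ whenever $c$ is degenerate, so there is no contradiction with type $(1,3,1)$. For class 16, let $x$ send $e_1\mapsto f_1$, $e_2\mapsto f_2$, $f_1\mapsto g$; let $y$ send $e_2\mapsto g$ and $z$ send $e_3\mapsto g$, with all other basis vectors sent to $0$. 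This is a faithful module of type $(3,2,1)$ with $L_y=L_z=0$. When $c$ has rank $2$ with radical $\langle z\rangle$, this is even forced. The map $u\mapsto N_u\in V_1^*$ has a kernel, and $N_u=0$ gives $c(u,\cdot)=0$. Hence $N_z=0$ and $N_x,N_y$ form a basis of $V_1^*$, so $L_v(e)=M_w(e)\bigl(c(x,v)f_1+c(y,v)f_2\bigr)$ for the dual basis $f_1,f_2$. Thus $\dim S=2$ and $S$ contains no rank-$2$ map at all. When $c$ is nondegenerate, the same argument rules out $\dim V_1=2$ entirely.

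Second, even with $\dim S=3$ and $\bigcap_{L\in S}\ker L=0$, excluding degenerate orbits does not leave $S_0$. The generic three-dimensional subspace is $\{\begin{pmatrix} a&0&c\\ 0&b&c\end{pmatrix}\}$, spanned by three rank-one maps. It has no common kernel and its images span $V_1$, but its rank-$\le 1$ locus is just three lines. $S_0$ is itself a degenerate orbit: it contains a plane of rank-one maps with common kernel $\langle e_1,e_2\rangle$, and nothing in the hypotheses produces such a plane. The module relations in fact push the other way. In class 16 with $\dim S=3$, the plane $\langle L_y,L_z\rangle$ consists of rank-one maps with common image $\ker N_x$. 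That is impossible in $S_0$, whose unique rank-$\le 1$ plane has images spanning $V_1$. So the normal form cannot be salvaged, and any argument relying on it, such as the derivation of $P=rI_3$ in Lemma~\ref{lem:131-321}, needs a different justification.
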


\begin{proof}
	Since the images of $L_x,L_y,L_z$ span $V_1$, some linear combination has rank $2$.
	Replacing $x$ by this combination, we may assume that $L_x$ has rank $2$. After choosing
	bases of $V_0$ and $V_1$, this gives the required form of $L_x$.
	
	Next, replacing $y$ by $y-\alpha x$ modulo $J^2$ and changing the basis of $V_0$ while preserving
	the form of $L_x$, we may arrange that the first two columns of $L_y$ vanish. Since the class of $y$
	modulo $J^2$ is independent of that of $x$, the resulting map is nonzero. After rescaling and changing
	the basis of $V_1$ compatibly with $L_x$, we obtain
	\[
	L_y=
	\begin{pmatrix}
		0&0&1\\
		0&0&0
	\end{pmatrix}.
	\]
	
	Finally, replacing $z$ by $z-\alpha x-\beta y$ modulo $J^2$ and using basis changes preserving the forms
	of $L_x$ and $L_y$, we reduce $L_z$ to
	\[
	L_z=
	\begin{pmatrix}
		0&0&0\\
		u&v&w
	\end{pmatrix}
	\]
	with $(u,v,w)\neq 0$. A direct computation of the stabilizer of the pair $(L_x,L_y)$ shows that every
	nonzero row vector $(u,v,w)$ is equivalent to $(0,0,1)$. Hence $L_z$ may be brought to the required form.
\end{proof}

\begin{lemma}\label{lem:homtool}
	Let $A = k \cdot 1 \oplus J$ be a local commutative algebra, let $V$ be an $A$-module, and let
	$L \subseteq V$ be a subspace such that $JL = 0$. Then every linear map
	\[
	\varphi : V/JV \to L
	\]
	induces an $A$-endomorphism $\widetilde{\varphi} \in \End_A(V)$ given by
	\[
	\widetilde{\varphi}(v) = \varphi(v + JV).
	\]
	In particular, there is an embedding
	\[
	\Hom_k(V/JV, L) \hookrightarrow \End_A(V).
	\]
\end{lemma}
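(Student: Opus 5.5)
We argue exactly as in the proof of Lemma~\ref{lem:14}, where $L=JV$, but now for an arbitrary subspace $L$ annihilated by $J$. Given a linear map $\varphi\colon V/JV\to L$, define $\widetilde{\varphi}=\varphi\circ\pi$, where $\pi\colon V\to V/JV$ is the canonical projection. This is a $k$-linear map $V\to V$, since $L\subseteq V$. The plan has three steps: check $A$-linearity, check injectivity of $\varphi\mapsto\widetilde{\varphi}$, and conclude.

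\textbf{Step 1: $A$-linearity.} Since $A=k\cdot 1\oplus J$ and $\widetilde{\varphi}$ is already $k$-linear, it suffices to show $\widetilde{\varphi}(xv)=x\,\widetilde{\varphi}(v)$ for $x\in J$ and $v\in V$. On the left, $xv\in JV$, so $\pi(xv)=0$ and hence $\widetilde{\varphi}(xv)=0$. On the right, $\widetilde{\varphi}(v)\in L$ and $JL=0$, so $x\,\widetilde{\varphi}(v)=0$. Both sides vanish, so $\widetilde{\varphi}\in\End_A(V)$.

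\textbf{Step 2: injectivity.} The assignment $\varphi\mapsto\widetilde{\varphi}$ is clearly $k$-linear. If $\widetilde{\varphi}=0$, then $\varphi\circ\pi=0$, and since $\pi$ is surjective this forces $\varphi=0$. Hence we obtain an embedding $\Hom_k(V/JV,L)\hookrightarrow\End_A(V)$.

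None of these steps is a real obstacle; the argument is routine. The only point needing care is that well-definedness is automatic, because $\widetilde{\varphi}$ is defined as a composition with $\pi$ rather than via representatives. If $\widetilde{\varphi}$ is instead written as $v\mapsto\varphi(v+JV)$, as in the statement, it is well defined on cosets by construction.
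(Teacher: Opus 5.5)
Your proof is correct and follows the paper's argument: for $x\in J$ both sides of $\widetilde{\varphi}(xv)=x\,\widetilde{\varphi}(v)$ vanish, because $xv\in JV$ and $JL=0$. You also check that $\varphi\mapsto\widetilde{\varphi}$ is injective, using surjectivity of $\pi$; the paper leaves this step implicit in the word ``embedding''.
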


\begin{proof}
	For $x \in J$ and $v \in V$, we have $xv \in JV$, hence
	\[
	\widetilde{\varphi}(xv) = \varphi(xv + JV) = \varphi(0) = 0.
	\]
	On the other hand, since $\widetilde{\varphi}(v) \in L$ and $JL = 0$, we have
	\[
	x \widetilde{\varphi}(v) = 0.
	\]
	Thus $\widetilde{\varphi}(xv) = x \widetilde{\varphi}(v)$ for all $x\in J$, so
	$\widetilde{\varphi} \in \End_A(V)$.
\end{proof}

We now consider the Hilbert--Samuel type $(1,2,1,1)$, where the existence of a nonzero third power
$J^3$ provides a natural source of $A$-endomorphisms.

\begin{lemma}
	Let $A = k \cdot 1 \oplus J$ be a local commutative algebra of dimension $5$ with Hilbert--Samuel type
	$(1,2,1,1)$, and let $V$ be a faithful $A$-module of dimension $6$. Then
	\[
	\dim \End_A(V) \ge 6.
	\]
\end{lemma}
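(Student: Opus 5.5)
The plan is to apply Lemma~\ref{lem:homtool} with $L=\Soc V=\{v\in V : Jv=0\}$. I will then add the image of $A$ itself and control the overlap of the two subspaces. Set $a:=\dim(V/JV)$ and $s:=\dim\Soc V$. By Lemma~\ref{lem:homtool} we get a subspace
\[
H:=\Hom_k(V/JV,\Soc V)\hookrightarrow \End_A(V),\qquad \dim H=as .
\]
Since $A$ is commutative and $V$ is faithful, $A$ embeds in $\End_A(V)$ as a $5$-dimensional subspace. This gives
\[
\dim \End_A(V)\ \ge\ 5+as-\dim(A\cap H).
\]

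\textbf{Step 1: bound $a$ and $s$ from below.} If $a=1$, then $V$ is cyclic, so $V\cong A/\mathrm{Ann}(V)=A$ by faithfulness, and $\dim V=5\neq 6$. This is a contradiction, so $a\ge 2$. For $s$, pass to the dual module $V^*$ with the transposed action. It is again a faithful $6$-dimensional $A$-module, and $V^*/JV^*$ is dual to $\Soc V$. The same argument therefore gives $s\ge 2$. Hence $as\ge 4$.

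\textbf{Step 2: bound the intersection $A\cap H$.} Let $u\in A$ act by an element of $H$. Then $u$ is not a unit, since $H$ consists of non-invertible maps (every map in $H$ kills $JV\neq 0$). So $u\in J$, and $uJV=0$. Faithfulness then gives $uJ=0$, i.e.\ $u\in\Soc A$. It remains to show $\dim\Soc A\le 2$ for Hilbert--Samuel type $(1,2,1,1)$.

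First, $J^3\subseteq\Soc A$, and $\dim J^3=1$. Next I claim $\Soc A\cap J^2=J^3$. Write $J^2=\langle w\rangle+J^3$. Then $J^3=J\cdot J^2=Jw+J^4=Jw$, because $J^4=0$. So if some element of $J^2\setminus J^3$ annihilated $J$, then $w$ would annihilate $J$ and $J^3=0$, contradicting the type. Therefore
\[
\dim\Soc A\le \dim J^3+\dim(J/J^2)=1+2=3 .
\]
I will sharpen this to $\dim\Soc A\le 2$. Suppose two independent elements $x,y$ of $J$, independent modulo $J^2$, both lay in $\Soc A$. Then $J^2$ would be generated by products of elements of $\Soc A$ with $J$, which all vanish, so $J^2=0$, a contradiction. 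Thus $\Soc A$ meets $J/J^2$ in dimension at most $1$, and $\dim\Soc A\le 2$.

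\textbf{Step 3: conclude.} Combining the steps,
\[
\dim\End_A(V)\ \ge\ 5+4-2=7\ \ge 6 .
\]
The main point needing care is Step~1: the cyclic case for $a$ and the duality argument for $s$. In particular, I must check that the transposed action on $V^*$ is still faithful, which holds because $A$ is commutative, and that $\Soc V$ is genuinely dual to the top $V^*/JV^*$. Step~2 is the other delicate place, since the socle estimate uses the specific Hilbert--Samuel type, in particular $J^4=0$ and $\dim J^3=1$.
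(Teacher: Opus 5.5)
Your proof is correct, and it takes a genuinely different route from the paper's. That route is also more robust.

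\textbf{The paper's route.} The paper works with the smaller subspace $L=J^3V$ and uses only $a\ge 2$, so $\Hom_k(V/JV,J^3V)$ has dimension at least $2$. It then asserts that the intersection of this subspace with the image of $A$ comes from elements of $J^3$ alone, hence has dimension at most $1$. This gives $5+2-1=6$.

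\textbf{Where that assertion fails.} The assertion holds when $\Soc A=J^3$, i.e.\ for the Gorenstein class $k[x,y]/(xy,y^2-x^3)$. It fails for the other algebra of this type, $A=k[x,y]/(xy,y^2,x^4)$, where $\Soc A=\langle y,x^3\rangle$. Let $x$ act by $e_1\mapsto e_2\mapsto e_3\mapsto e_4\mapsto 0$ and $f_1\mapsto f_2\mapsto 0$. Let $y$ send $f_1\mapsto e_4$ and kill $e_1,\dots,e_4,f_2$. This is a faithful $6$-dimensional module with $a=2$ and $J^3V=\langle e_4\rangle$. Both $x^3$ and $y$ lie in $\Hom_k(V/JV,J^3V)$, so the paper's count only yields $5+2-2=5$.

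\textbf{What your route buys.} Your two extra ingredients close exactly this gap. Enlarging $L$ to all of $\Soc V$, and proving $s\ge 2$ via the dual module $V^*$, raises the contribution to $as\ge 4$. That absorbs the correct overlap bound $\dim\Soc A\le 2$, which you derive from the Hilbert--Samuel type. The cost is the duality argument and the socle computation in $A$. The gain is a uniform treatment of both isomorphism classes and the sharper bound $\dim\End_A(V)\ge 7$. In the example above one computes $\dim\End_A(V)=8$, consistent with your bound.

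\textbf{A small correction.} In your closing remark you attribute the faithfulness of $V^*$ to commutativity. Commutativity of $A$ is what makes the transposed action on $V^*$ a left $A$-module structure. Faithfulness of $V^*$ is automatic, since $u$ acts on $V^*$ by the transpose of its action on $V$.
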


\begin{proof}
	We have $\dim(J/J^2)=2$, $\dim(J^2/J^3)=1$, $\dim J^3 = 1$, and $J^4=0$. Set $L := J^3V$. Then
	$JL = J^4V = 0$.
	
	Since $V$ is faithful and $J^3 \neq 0$, we have $L \neq 0$, hence $\dim L = 1$.
	
	Let $a := \dim(V/JV)$. Since $V$ is not cyclic over $A$, we have $a \ge 2$.
	
	By Lemma~\ref{lem:homtool},
	\[
	\Hom_k(V/JV, L) \hookrightarrow \End_A(V),
	\]
	and therefore
	\[
	\dim \Hom_k(V/JV, L) = a \ge 2.
	\]
	
	The intersection of this subspace with the image of $A$ consists of maps induced by elements of $J^3$,
	and therefore has dimension at most $1$. Thus
	\[
	\dim \End_A(V) \ge 5 + 2 - 1 = 6.
	\]
\end{proof}

\begin{lemma}
	Let $A = k \cdot 1 \oplus J$ be a local commutative algebra of dimension $5$ with Hilbert--Samuel type
	$(1,3,1)$, and let $V$ be a faithful $A$-module of dimension $6$ such that
	\[
	\dim(V/JV, JV/J^2V, J^2V) = (4,1,1).
	\]
	Then
	\[
	\dim \End_A(V) \ge 8.
	\]
\end{lemma}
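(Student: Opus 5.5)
We use Lemma~\ref{lem:homtool} with a socle-type subspace that is larger than $J^2V$. Set $a=\dim(V/JV)=4$. Take $L:=\{v\in V : Jv=0\}$. Then $J^2V\subseteq L$, since $J\cdot J^2V=J^3V=0$; here $J^3=0$ because the Hilbert--Samuel type is $(1,3,1)$. The aim is to show that $\dim L\ge 2$. Lemma~\ref{lem:homtool} then gives
\[
\dim\End_A(V)\ \ge\ \dim\Hom_k(V/JV,L)\ =\ 4\dim L\ \ge\ 8.
\]
If $\dim L$ turns out to be exactly $2$, we will also keep in reserve the scalar endomorphisms, which are not in the image of $\Hom_k(V/JV,L)$. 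This gives at least $9$, more than needed.

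To bound $\dim L$, fix the filtration $V\supset JV\supset J^2V\supset0$ with dimensions $(4,1,1)$. Choose a splitting $V=V_0\oplus V_1\oplus V_2$ as in Lemma~\ref{lem:131-321}, and write each $u\in J$ in block lower-triangular form with blocks $L_u:V_0\to V_1$, $N_u:V_1\to V_2$, $M_u:V_0\to V_2$. Since $J^2V=V_2\neq 0$ and $J^2V=J(JV)$, some $N_u$ is nonzero. Thus $J$ does not kill $V_1$, and $V_2\subseteq L$. We need one more vector killed by $J$. Consider the linear map
\[
\Phi\colon V_0\oplus V_1\to \Hom_k(J,V),\qquad v\mapsto(u\mapsto uv).
\]
This map is too crude. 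Instead, look only at vectors in $V_0$ modulo $V_1\oplus V_2$. A vector $v_0+v_1$ lies in $L$ exactly when $L_u v_0=0$ and $M_u v_0+N_u v_1=0$ for all $u\in J$.

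The conditions $L_u v_0=0$ for $u=x,y,z$ are at most $3$ linear conditions on the $4$-dimensional space $V_0$. More precisely, the $L_u$ are row vectors in $V_0^*$, since $\dim V_1=1$. Their span has dimension at most $3$, so they have a common kernel $K\subseteq V_0$ with $\dim K\ge1$. For $v_0\in K$ we must still satisfy $M_u v_0+N_u v_1=0$ for all $u$. These are scalar equations in the unknown scalar $v_1\in V_1\cong k$.

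This is the main obstacle: the system is generally overdetermined, because the $N_u$ and $M_u$ must be compatible for all $u$. Here the commutativity $uu'=u'u$ and the relation $J^3=0$ must be used. From $J\cdot J^2V=0$ and $J^2\ni w$ acting as $N\!L$-products, we get
\[
N_uL_{u'}=N_{u'}L_u\quad\text{for all } u,u'.
\]
Hence the functionals $L_u$ are proportional to the $N_u$: we have $L_u=N_u\lambda$ for a fixed $\lambda\in V_0^*$ whenever some $N_u\neq0$. Consequently $\dim\operatorname{span}\{L_u\}\le1$, and so $\dim K\ge3$. On the $3$-dimensional space $K$, the map $v_0\mapsto(M_uv_0)_u$ modulo the line $\{(N_uv_1)_u\}$ has to be analysed. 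The cleanest route is to change $v_0$ and eliminate $v_1$. Pick $u_0$ with $N_{u_0}\ne0$ and solve $v_1=-M_{u_0}v_0/N_{u_0}$. The remaining conditions then become linear functionals $M_u-\tfrac{N_u}{N_{u_0}}M_{u_0}$ on $K$, for $u$ ranging over a complement. There are at most two such independent functionals, because $\dim(J/J^2)=3$ and $w\in J^2$ acts only through $M_w$, which we handle via $w=$ a combination of products. This leaves a nonzero solution in the $3$-dimensional $K$, and yields the second independent vector of $L$.

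If this count turns out to be off by one, the fallback is to apply Lemma~\ref{lem:homtool} with $L=V_2$, which gives $4$ maps. To these we add the scalars and the maps $V\to JV$ obtained by composing the projection $V\to V/JV$ with suitable elements. We then check directly, as in Lemma~\ref{lem:131-321}, that the commutation equations $Xu=uX$ leave at least $8$ free parameters.
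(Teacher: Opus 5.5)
Your proof is correct, but it follows a different route from the paper's.

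\textbf{What the paper does.} It keeps $L=J^2V$, so Lemma~\ref{lem:homtool} gives only a $4$-dimensional space. It then adds the $5$-dimensional image of $A$ and asserts that the overlap consists of elements of $J^2$, hence is at most $1$-dimensional. This gives $5+4-1=8$.

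\textbf{What you do.} You enlarge $L$ to $\Soc_A(V)$ and show $\dim\Soc_A(V)\ge 2$. Then Lemma~\ref{lem:homtool} alone gives $4\cdot 2=8$, or $9$ with the scalars. You never need to control how the image of $A$ meets this space.

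\textbf{Why this matters.} Your identity $N_uL_{u'}=N_{u'}L_u$ comes from comparing the $V_0\to V_2$ blocks of $uu'$ and $u'u$, and it gives $L_u=N_u\lambda$. So every $u$ in the $3$-dimensional kernel $\ker N$ of $u\mapsto N_u$ acts only through $M_u$, i.e.\ by an element of $\Hom_k(V/JV,J^2V)$. For example, for $A=k[x,y,z]/(x^3,y^2,z^2,xy,xz,yz)$, the elements $y,z\notin J^2$ act this way. The intersection of the image of $A$ with $\Hom_k(V/JV,J^2V)$ is therefore $3$-dimensional, not at most $1$-dimensional. The paper's count then yields only $5+4-3=6$, so your socle argument is what actually delivers the bound $8$.

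\textbf{Tightening the write-up.} State the elimination step cleanly instead of hedging. Fix $u_0$ with $N_{u_0}\neq 0$. For every $u\in J$ we have $u-\frac{N_u}{N_{u_0}}u_0\in\ker N$, so the conditions on $v_0\in K$ reduce to $M_{u'}v_0=0$ for $u'\in\ker N$. Moreover, $M_w$ vanishes on $K$, because $w$ is a combination of products $u_iu_j$, and these act on $V_0$ by $N_{u_i}L_{u_j}$. Since $\ker N$ is $3$-dimensional and contains $w$, at most two functionals remain on the $3$-dimensional space $K$. This gives a nonzero $v_0$, and $v_0+v_1$ is a socle vector outside $V_2$. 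The fallback paragraph and the aside about $\Phi$ can then be deleted.
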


\begin{proof}
	Set $L := J^2V$. Since $J^3 = 0$, we have $JL = 0$ and $\dim L = 1$.
	
	By Lemma~\ref{lem:homtool},
	\[
	\Hom_k(V/JV, L) \hookrightarrow \End_A(V),
	\]
	so
	\[
	\dim \Hom_k(V/JV, L) = 4.
	\]
	
	If multiplication by $a \in A$ belongs to this subspace, then $a(V) \subseteq L$ and $a(JV)=0$.
	It follows that $a \in J^2$. Hence the intersection has dimension at most $1$.
	
	Therefore
	\[
	\dim \End_A(V) \ge 5 + 4 - 1 = 8.
	\]
\end{proof}

We next consider the type $(1,2,2)$.

\begin{lemma}
	Let $A = k \cdot 1 \oplus J$ be a local commutative algebra of dimension $5$ with Hilbert--Samuel type
	$(1,2,2)$, and let $V$ be a faithful $A$-module of dimension $6$. Then
	\[
	\dim \End_A(V) \ge 6.
	\]
\end{lemma}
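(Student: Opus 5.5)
The plan is to apply Lemma~\ref{lem:homtool} with the largest natural choice of subspace $L\subseteq V$ killed by $J$, namely
\[
L:=\{v\in V : Jv=0\}\supseteq J^2V,
\]
which is legitimate because $J^3=0$. This gives
\[
\Hom_k(V/JV,L)\hookrightarrow \End_A(V).
\]
Write $H$ for the image of this embedding, and set
\[
a:=\dim(V/JV),\qquad l:=\dim J^2V,\qquad \lambda:=\dim L,\qquad s:=\dim(A\cap H),
\]
where $A$ means its image in $M_6(k)$. Since the image of $A$ and $H$ both lie in $\End_A(V)$,
\[
\dim\End_A(V)\ge 5+a\lambda-s .
\]
It therefore suffices to prove $s\le 2$ and $a\lambda\ge 3$.

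\textbf{Step 1: $s\le 2$.} Suppose multiplication by $b\in A$ lies in $H$. Then $b$ is not invertible, so $b\in J$. Moreover $b(JV)=0$, so $(bJ)V=0$, and faithfulness gives $bJ=0$. Hence $A\cap H\subseteq \operatorname{Ann}_J(J)$. This annihilator contains $J^2$, which has dimension $2$. I claim it equals $J^2$. If some $u\in J\setminus J^2$ had $uJ=0$, complete $u$ to generators $x,u$ of $J$ modulo $J^2$. Then $J^2$ would be spanned by $x^2$ alone, contradicting $\dim J^2=2$. Thus $s\le 2$.

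\textbf{Step 2: lower bounds on $a$ and $l$.} First, $a\ge 2$, since a cyclic module has dimension at most $\dim A=5<6$. Next, $J^2\ne 0$ and $V$ is faithful, so $l\ge 1$. If $l\ge 2$, then $a\lambda\ge a\,l\ge 4$ and we are done. If $l=1$ and $a\ge 3$, then $a\lambda\ge 3$ and we are done.

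\textbf{Step 3: the case $l=1$, $a=2$.} This is the step I expect to need real care. Here $\dim(JV/J^2V)=3$. Multiplication induces a linear map
\[
JV/J^2V\longrightarrow \Hom_k(J/J^2,\,J^2V)
\]
(using $J^3=0$), whose target has dimension $2\cdot 1=2$. Its kernel is therefore nonzero. Any lift to $JV$ of a nonzero kernel element is killed by $J$ and does not lie in $J^2V$. Hence $\lambda\ge l+1=2$, and $a\lambda\ge 4$.

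In all cases $\dim\End_A(V)\ge 5+3-2=6$. The only delicate points are identifying the intersection $A\cap H$ with the socle of $A$ and computing that socle, which is done in Step 1, and the extra socle vector found in Step 3.
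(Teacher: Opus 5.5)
Your proof is correct and follows essentially the same route as the paper: embed $\Hom_k(V/JV,L)$ for a subspace $L$ killed by $J$, bound its overlap with the image of $A$ by $\dim J^2=2$, and in the case $(a,b,c)=(2,3,1)$ find an extra vector of $JV/J^2V$ annihilated by $J$ because $3>2$. Your version is in fact slightly more complete: you prove the overlap bound via $\operatorname{Ann}_J(J)=J^2$, which the paper only asserts, and you treat the cases $c=1$, $a\ge 3$, i.e.\ $(a,b,c)=(3,2,1),(4,1,1)$, which the paper's two-case split leaves out.
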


\begin{proof}
	We have $\dim(J/J^2)=2$, $\dim J^2 = 2$, and $J^3 = 0$.
	
	Let
	\[
	(a,b,c) := \dim(V/JV, JV/J^2V, J^2V),
	\]
	so that $a+b+c=6$. Set $L := J^2V$. Then $JL = 0$.
	
	\medskip
	\noindent
	\textbf{Case 1:} $c = 2$.
	
	Then $\dim L = 2$, and
	\[
	\dim \Hom_k(V/JV, L) = 2a \ge 4,
	\]
	since $a \ge 2$.
	
	The intersection with the image of $A$ is contained in $J^2$, and therefore has dimension at most $2$.
	Thus
	\[
	\dim \End_A(V) \ge 5 + 4 - 2 = 7.
	\]
	
	\medskip
	\noindent
	\textbf{Case 2:} $(a,b,c) = (2,3,1)$.
	
	Then $\dim L = 1$ and
	\[
	\dim \Hom_k(V/JV, L) = 2.
	\]
	
	We claim that $\dim \Soc_A(V) \ge 2$. Let $x,y \in J$ whose images form a basis of $J/J^2$.
	Their actions on
	\[
	V_1 := JV/J^2V
	\]
	induce linear maps $N_x, N_y \in V_1^*$. Since $\dim V_1 = 3$, there exists a nonzero vector
	$v \in \ker N_x \cap \ker N_y$. Then $Jv = 0$, so $v \in \Soc_A(V)$. Together with
	$J^2V \subset \Soc_A(V)$, this implies that $\dim \Soc_A(V) \ge 2$.
	
	Hence
	\[
	\dim \Hom_k(V/JV, \Soc_A(V)) \ge 2 \cdot 2 = 4.
	\]
	
	As above, the intersection with the image of $A$ has dimension at most $2$, and therefore
	\[
	\dim \End_A(V) \ge 5 + 4 - 2 = 7.
	\]
\end{proof}

\begin{lemma}\label{lem:class16}
	Let
	\[
	A \cong k[x,y,z]/(x^3,\ y^2,\ z^2,\ xy,\ xz,\ yz)
	\]
	be the local algebra of class {\rm 16}, and let $V=k^6$ be a faithful $A$-module such that
	\[
	\dim(V/JV,\ JV/J^2V,\ J^2V)=(2,3,1),
	\]
	where $J=\rad(A)=(x,y,z)$ and $J^2=(x^2)$.
	
	Then
	\[
	\dim \End_A(V)\ge 6.
	\]
	In particular, the image of $A$ in $M_6(k)$ is not a maximal commutative subalgebra.
\end{lemma}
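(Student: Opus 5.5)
The plan is to apply Lemma~\ref{lem:homtool}, as in the type $(1,2,2)$ argument, taking for $L$ the full socle $\Soc_A(V)=\{v\in V : Jv=0\}$ instead of just $J^2V$. The point specific to class~16 is that the relations $xy=xz=yz=y^2=z^2=0$ force $y$ and $z$ to annihilate $J$ inside $A$, hence to annihilate $JV$. This should make the socle large.

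\emph{Step 1: $y$ and $z$ kill $JV$.} For $u\in J=(x,y,z)$ we have $yu\in\{yx,y^2,yz\}$-span, so $yu=0$ in $A$. Hence $y(uv)=(yu)v=0$ for all $v\in V$, i.e.\ $y\,JV=0$, and likewise $z\,JV=0$. So on the $4$-dimensional space $JV$ the only nontrivial action of $J$ is through $x$. Since $x^3=0$, we get $x(JV)\subseteq J^2V$, which is $1$-dimensional. Thus $x|_{JV}:JV\to J^2V$ has rank at most $1$.

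\emph{Step 2: bound the socle.} The kernel of $x|_{JV}$ has dimension at least $4-1=3$, and it is killed by $x$, $y$ and $z$. Therefore $\dim\Soc_A(V)\ge 3$. By Lemma~\ref{lem:homtool}, with $\dim V/JV=2$, this gives a subspace
\[
\Hom_k(V/JV,\Soc_A(V))\hookrightarrow\End_A(V)
\]
of dimension at least $2\cdot 3=6$. This alone already yields the bound $\dim\End_A(V)\ge 6$ claimed in the lemma.

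\emph{Step 3: the sharper count (optional).} To show that the image of $A$ is not maximal, I would also estimate the overlap with the image of $A$. Suppose multiplication by $a\in A$ lies in the subspace from Step~2. Then $a\,JV=0$, and $a\in J$ because it is nilpotent on $V$. By faithfulness $aJ=0$ in $A$, so $a\in\operatorname{ann}_A(J)=\langle x^2,y,z\rangle$, which has dimension $3$. Hence
\[
\dim\End_A(V)\ge 5+6-3=8>5=\dim A .
\]

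\emph{Main obstacle.} The only delicate point is Step~1. One must use the defining relations of class~16 itself, rather than just its Hilbert--Samuel type, to see that $y$ and $z$ act trivially on all of $JV$ and not merely on $JV/J^2V$. Once this is established, the remaining steps are a routine dimension count.
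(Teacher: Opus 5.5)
Your proof is correct, and it takes a more direct route than the paper's. The paper writes $u\in J$ in block form $(L_u,M_u,N_u)$ with respect to $V\supset JV\supset J^2V$. It shows $N_y=N_z=0$ and $\dim\ker N_x=2$, and then splits on $\dim U$, where $U=\operatorname{Im}L_y+\operatorname{Im}L_z$:
\begin{itemize}
\item[(i)] if $\dim U=2$, it bounds $\dim\Soc_A(V)\ge 3$ using $U$ and $J^2V$, and applies the embedding $\Hom_k(V/JV,\Soc_A(V))\hookrightarrow\End_A(V)$, plus scalars, to get $\ge 7$;
\item[(ii)] if $\dim U=1$, it puts $L_x,L_y,L_z,N_x$ into a normal form and computes the block-triangular centralizer explicitly, getting $\ge 9$.
\end{itemize}
Your observation that $\ker(x|_{JV})\subseteq\Soc_A(V)$ always has dimension $\ge 3$ makes the case split unnecessary. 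In the paper's notation this kernel is $\ker N_x\oplus V_2$, so the paper already had the ingredients. With it, the socle argument of case (i) applies uniformly, and the normal-form reduction (whose simultaneous normalization of $L_x$ the paper only sketches) and the entry-by-entry computation are avoided. The paper's route buys only an explicit description of $\End_A(V)$ in subcase (ii). Your overlap count in Step 3 gives $\ge 8$ uniformly, which is also stronger than the paper's $\ge 7$ in case (i).

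A few small remarks. Step 3 is not needed for non-maximality, since $\dim\End_A(V)\ge 6>5=\dim A$ already gives it. In Step 3, the maps in $\Hom_k(V/JV,\Soc_A(V))$ need not be nilpotent when $\Soc_A(V)\not\subseteq JV$; for example, $V\cong k\oplus A$ is faithful of type $(2,3,1)$ and contains an idempotent such map. You don't need nilpotence, though: $aJ=0$ alone forces $a\in\operatorname{ann}_A(J)=\langle x^2,y,z\rangle\subseteq J$. Finally, in Step 1, $x(JV)\subseteq J^2V$ holds because $xJ\subseteq J^2$, not because $x^3=0$.
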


\begin{proof}
	Write
	\[
	w:=x^2.
	\]
	Then
	\[
	J=\langle x,y,z,w\rangle,\qquad J^2=\langle w\rangle,
	\]
	and the defining relations imply
	\[
	yJ=0,\qquad zJ=0,\qquad wJ=0.
	\]
	
	Let
	\[
	V=V_0\oplus V_1\oplus V_2,
	\qquad \dim V_0=2,\ \dim V_1=3,\ \dim V_2=1,
	\]
	be a decomposition compatible with the filtration
	\[
	V\supset JV\supset J^2V\supset 0.
	\]
	Each $u\in J$ has block form
	\[
	u=
	\begin{pmatrix}
		0&0&0\\
		L_u&0&0\\
		M_u&N_u&0
	\end{pmatrix},
	\]
	where
	\[
	L_u:V_0\to V_1,\qquad N_u:V_1\to V_2,\qquad M_u:V_0\to V_2.
	\]
	
	Since $JV/J^2V=V_1$, the images of $L_x,L_y,L_z$ span $V_1$. Because $yJ=0$, we have
	\[
	N_yL_x=N_yL_y=N_yL_z=0,
	\]
	hence $N_y=0$. Similarly, $N_z=0$.
	
	On the other hand, $w=x^2$ acts nontrivially on $V$, since $V$ is faithful and $w\neq 0$ in $A$.
	Since the operator $x^2$ is represented by the block
	\[
	N_xL_x:V_0\to V_2,
	\]
	we have
	\[
	N_xL_x\neq 0.
	\]
	In particular, $N_x\neq 0$.
	
	Now consider the subspace
	\[
	U:=\operatorname{Im}L_y+\operatorname{Im}L_z \subset V_1.
	\]
	Because $xy=xz=0$, we have
	\[
	N_xL_y=N_xL_z=0,
	\]
	so
	\[
	U\subset \ker N_x.
	\]
	Since $N_x\neq 0$ and $\dim V_1=3$, it follows that $\dim\ker N_x=2$.
	
	We distinguish two cases.
	
	\medskip
	\noindent
	\textbf{Case 1:} $\dim U=2$.
	
	Then $U=\ker N_x$. Since $yJ=0$ and $zJ=0$, the images of the operators $y$ and $z$
	are contained in the socle
	\[
	\Soc_A(V):=\{v\in V:\ Jv=0\}.
	\]
	Moreover, $U\subset \Soc_A(V)$ and $V_2=J^2V\subset \Soc_A(V)$, so
	\[
	\dim \Soc_A(V)\ge 3.
	\]
	Therefore every map
	\[
	\varphi:V/JV\to \Soc_A(V)
	\]
	induces an $A$-endomorphism of $V$, and we obtain an embedding
	\[
	\Hom_k(V/JV,\Soc_A(V))\hookrightarrow \End_A(V).
	\]
	Since
	\[
	\dim(V/JV)=2,\qquad \dim\Soc_A(V)\ge 3,
	\]
	it follows that
	\[
	\dim\End_A(V)\ge 2\cdot 3=6.
	\]
	Adding scalar endomorphisms, we obtain in fact
	\[
	\dim\End_A(V)\ge 7.
	\]
	
	\medskip
	\noindent
	\textbf{Case 2:} $\dim U=1$.
	
	Since $L_y$ and $L_z$ are linearly independent modulo $J^2$, they are linearly independent as maps
	$V_0\to V_1$. Thus, after changing bases in $V_0$ and $V_1$, we may assume that
	\[
	L_y=
	\begin{pmatrix}
		1&0\\
		0&0\\
		0&0
	\end{pmatrix},
	\qquad
	L_z=
	\begin{pmatrix}
		0&1\\
		0&0\\
		0&0
	\end{pmatrix}.
	\]
	Since $N_x\neq 0$ and $N_xL_y=N_xL_z=0$, we may also assume that
	\[
	N_x=(0\ 0\ 1).
	\]
	Finally, since $N_xL_x\neq 0$, after changing the basis in a complement of
	$\operatorname{Im}L_y+\operatorname{Im}L_z$, we may write
	\[
	L_x=
	\begin{pmatrix}
		0&0\\
		1&0\\
		0&1
	\end{pmatrix}.
	\]
	
	Let
	\[
	X=
	\begin{pmatrix}
		P&0&0\\
		U'&Q&0\\
		W&T&r
	\end{pmatrix}\in \End_A(V).
	\]
	The relations $Xu=uX$ for $u=x,y,z$ yield
	\[
	rN_x=N_xQ,\qquad QL_u=L_uP\quad (u=x,y,z),
	\]
	together with
	\[
	TL_u+rM_u=N_uU'+M_uP\quad (u=x,y,z).
	\]
	
	A straightforward computation with the above normal forms shows that these equations force
	\[
	P=rI_2,\qquad Q=rI_3,
	\]
	and impose only the additional conditions
	\[
	t_1=0,\qquad u_{31}=t_2,\qquad u_{32}=t_3,
	\]
	while the remaining entries of $U'$, all entries of $W\in\Hom(V_0,V_2)$, and the parameters
	$t_2,t_3,r$ are free.
	
	Thus $\End_A(V)$ contains at least the following free parameters:
	\[
	r \quad (1),\qquad W\in \Hom(V_0,V_2)\quad (2),
	\]
	\[
	\text{the first two rows of }U' \quad (4),\qquad t_2,t_3 \quad (2).
	\]
	Hence
	\[
	\dim\End_A(V)\ge 1+2+4+2=9.
	\]
	
	In both cases,
	\[
	\dim\End_A(V)>\dim A=5.
	\]
	Therefore the image of $A$ in $M_6(k)$ cannot be maximal commutative.
\end{proof}

Combining the results obtained in all cases, we conclude that no local commutative algebra of dimension $5$ admits a faithful $6$-dimensional representation whose image is a maximal commutative subalgebra of $M_6(k)$. This completes the proof of Theorem~1.

In particular, the case $n=6$ does not admit counterexamples to the inequality $\dim A \ge n$, providing further evidence that the smallest dimension in which $\dim A < n$ may occur is strictly greater than $6$.

\section{Discussion}

The result obtained in this paper shows that for $n = 6$ every maximal commutative subalgebra of $M_6(k)$ has dimension at least $6$. This complements the known classification for $n \leq 5$ and shows that $n = 6$ does not admit counterexamples to the inequality $\dim A \geq n$.

For $n \geq 7$, the situation becomes substantially more complicated. In particular, there exist infinitely many isomorphism classes of local commutative algebras, in contrast to the finite classification available in dimension $5$. As a result, the case-by-case analysis used in this paper cannot be extended directly.

More precisely, the argument relies on two features specific to the case $n=6$. First, the classification of local commutative algebras of dimension $5$ reduces the problem to a finite list of possibilities. Second, in each case one can construct sufficiently large subspaces of $\End_A(V)$ by exploiting subspaces annihilated by powers of the radical. For $n \geq 7$, neither of these properties persists in a form that allows a uniform treatment: the space of possible local algebras becomes too large, and the structure of the radical may be more complicated, making it difficult to produce enough independent endomorphisms by the same method.

Moreover, general lower bounds such as Laffey’s estimate $\dim A > (2n)^{2/3} - 1$ are not sufficient to obtain sharp results in small dimensions. For $n = 6$, this bound yields approximately $4.8088$, and hence only implies $\dim A \geq 5$. Similar limitations persist in higher dimensions.

These considerations suggest that new methods are required to treat the cases $n \geq 7$. One possible direction would be to identify structural constraints on the action of the radical on $V$ that force the existence of sufficiently many $A$-endomorphisms without relying on a classification. Another approach could be to obtain sharper lower bounds on $\dim \End_A(V)$ under general assumptions on the nilpotent structure of $A$.

Finally, the present result provides further evidence that the smallest dimension for which $\dim A < n$ may occur is strictly greater than $6$, although determining the precise threshold remains open.

\subsection*{Appendix: computation for class 16 in the case $(a,b,c)=(2,3,1)$}
	
	We keep the notation from Lemma~\ref{lem:class16}. Thus
	\[
	L_y=
	\begin{pmatrix}
		1&0\\
		0&0\\
		0&0
	\end{pmatrix},
	\qquad
	L_z=
	\begin{pmatrix}
		0&1\\
		0&0\\
		0&0
	\end{pmatrix},
	\qquad
	L_x=
	\begin{pmatrix}
		0&0\\
		1&0\\
		0&1
	\end{pmatrix},
	\qquad
	N_x=(0\ 0\ 1),
	\qquad
	N_y=N_z=0.
	\]
	
	Let
	\[
	X=
	\begin{pmatrix}
		P&0&0\\
		U&Q&0\\
		W&T&r
	\end{pmatrix}\in \End_A(V),
	\]
	where
	\[
	P=\begin{pmatrix}p_{11}&p_{12}\\ p_{21}&p_{22}\end{pmatrix},\qquad
	Q=(q_{ij})_{1\le i,j\le 3},\qquad
	T=(t_1\ t_2\ t_3).
	\]
	
	The relation \(Xu=uX\) for \(u=x\) gives
	\[
	rN_x=N_xQ,
	\]
	hence
	\[
	(0\ 0\ r)=(0\ 0\ 1)Q=(q_{31}\ q_{32}\ q_{33}),
	\]
	so
	\[
	q_{31}=q_{32}=0,\qquad q_{33}=r.
	\]
	
	Next, from
	\[
	QL_y=L_yP
	\]
	we obtain
	\[
	Q
	\begin{pmatrix}
		1&0\\
		0&0\\
		0&0
	\end{pmatrix}
	=
	\begin{pmatrix}
		1&0\\
		0&0\\
		0&0
	\end{pmatrix}
	P.
	\]
	Comparing entries gives
	\[
	q_{11}=p_{11},\qquad q_{21}=p_{21},\qquad q_{31}=0,
	\qquad p_{12}=p_{21}=0.
	\]
	
	Similarly, from
	\[
	QL_z=L_zP
	\]
	we get
	\[
	Q
	\begin{pmatrix}
		0&1\\
		0&0\\
		0&0
	\end{pmatrix}
	=
	\begin{pmatrix}
		0&1\\
		0&0\\
		0&0
	\end{pmatrix}
	P,
	\]
	hence
	\[
	q_{12}=p_{22},\qquad q_{22}=0,\qquad q_{32}=0,
	\qquad p_{11}=p_{22},\qquad p_{21}=0.
	\]
	
	Finally, from
	\[
	QL_x=L_xP
	\]
	we obtain
	\[
	Q
	\begin{pmatrix}
		0&0\\
		1&0\\
		0&1
	\end{pmatrix}
	=
	\begin{pmatrix}
		0&0\\
		1&0\\
		0&1
	\end{pmatrix}
	P,
	\]
	that is,
	\[
	\begin{pmatrix}
		q_{12}&q_{13}\\
		q_{22}&q_{23}\\
		q_{32}&q_{33}
	\end{pmatrix}
	=
	\begin{pmatrix}
		0&0\\
		p_{11}&p_{12}\\
		p_{21}&p_{22}
	\end{pmatrix}.
	\]
	Using the previously obtained relations, we conclude that
	\[
	p_{12}=p_{21}=0,\qquad p_{11}=p_{22}=r,
	\]
	and therefore
	\[
	P=rI_2,\qquad Q=rI_3.
	\]
	
	It remains to consider the last block equation
	\[
	TL_u+rM_u=N_uU+M_uP.
	\]
	Since \(P=rI_2\), this simplifies to
	\[
	TL_u=N_uU.
	\]
	
	For \(u=y,z\), because \(N_y=N_z=0\), we get
	\[
	TL_y=0,\qquad TL_z=0.
	\]
	Since
	\[
	TL_y=(t_1\ 0),\qquad TL_z=(0\ t_1),
	\]
	it follows that
	\[
	t_1=0.
	\]
	
	For \(u=x\), we have
	\[
	TL_x=N_xU.
	\]
	Writing
	\[
	U=(u_{ij})_{1\le i\le 3,\ 1\le j\le 2},
	\]
	this becomes
	\[
	(t_2\ t_3)=(u_{31}\ u_{32}),
	\]
	so
	\[
	u_{31}=t_2,\qquad u_{32}=t_3.
	\]
	
	Thus the parameters \(r\), \(t_2\), \(t_3\), all entries of \(W\in \Hom(V_0,V_2)\),
	and the four entries \(u_{11},u_{12},u_{21},u_{22}\) are free. Hence
	\[
	\dim \End_A(V)\ge 1+2+2+4=9.
	\]





\end{document}